\newtheorem{thm}{Theorem}[section]
\newtheorem{prop}[thm]{Proposition}
\newtheorem{lemma}[thm]{Lemma}
\theoremstyle{definition}
\newcommand{\RR}{\mathbf R}
\newcommand{\ZZ}{\mathbf Z}
\title{Nonconvexity of the set of hypergraph degree sequences}
\author{Ricky Ini Liu\\University of Michigan\\Ann Arbor, Michigan\\\texttt{riliu@umich.edu}}
\begin{document}
\maketitle
\begin{abstract}
It is well known that the set of possible degree sequences for a graph on $n$ vertices is the intersection of a lattice and a convex polytope. We show that the set of possible degree sequences for a $k$-uniform hypergraph on $n$ vertices is not the intersection of a lattice and a convex polytope for $k \geq 3$ and $n \geq k+13$. We also show an analogous nonconvexity result for the set of degree sequences of $k$-partite $k$-uniform hypergraphs and the generalized notion of $\lambda$-balanced $k$-uniform hypergraphs.
\end{abstract}

\section{Introduction}
The \emph{degree sequence} of a graph $G$ on vertices $v_1, v_2, \dots, v_n$ is the sequence $d(G)=(d_1, d_2, \dots, d_n)$, where $d_i$ is the degree of the vertex $v_i$ in $G$. The Erd\H{o}s-Gallai Theorem~\cite{ErdosGallai} states that a sequence $(d_1, d_2, \dots, d_n)$ is the degree sequence of a (simple) graph if and only if $\sum_i d_i$ is even and the $d_i$ satisfy a certain set of inequalities. Koren~\cite{Koren} showed that these inequalities define a convex polytope $D_n(2)$, so that the sequences with even sum lying in this polytope are exactly the degree sequences of graphs on $n$ vertices. (For more on this polytope, see \cite{Stanley}.)

We consider the analogous question for $k$-uniform hypergraphs when $k>2$. Klivans and Reiner~\cite{KlivansReiner} verified computationally that the set of degree sequences for $k$-uniform hypergraphs is the intersection of a lattice and a convex polytope for $k=3$ and $n \leq 8$ and asked whether this holds in general. We will show in Section 2 that it does not hold for $k \geq 3$ and $n \geq k+13$.

Similarly, we can associate to a bipartite graph a pair of degree sequences giving the degrees of the vertices in each part. The Gale-Ryser Theorem~\cite{Ryser} gives necessary and sufficient conditions in the form of a system of linear inequalities for a pair of degree sequences to arise from a bipartite graph, so that the set of these pairs of degree sequences can again be described as the intersection of a lattice and a convex polytope. We will show in Section 3 that the analogous result does not hold for $k$-partite $k$-uniform hypergraphs if there exist three parts of sizes at least 5, 6, and 6, respectively. We also generalize the notion of $k$-partite $k$-uniform hypergraphs to that of $\lambda$-balanced $k$-uniform hypergraphs and prove a similar statement in this case.

\section{Hypergraph degree sequences}
A \emph{(simple) $k$-uniform hypergraph} $K$ on the set $[n]=\{1, 2, \dots, n\}$ is a collection of distinct elements (called \emph{hyperedges}) of $\binom{[n]}{k}$, the $k$-element subsets of $[n]$. The \emph{degree sequence} of $K$ is $d(K)=(d_1, d_2, \dots, d_n)$, where $d_i$ is the number of hyperedges in $K$ containing $i$.

We consider degree sequences as points in $\RR^n$. Let $e_i$ be the $i$th standard basis vector, and for any $S = \{i_1, \dots, i_k\} \subset [n]$, write $e_S=e_{i_1i_2\dotsm i_k}=e_{i_1}+e_{i_2}+\dotsb+e_{i_k}$. Each degree sequence $d(K)$ is the sum of some subset of the $e_S$'s, so the convex hull of all such degree sequences is the zonotope
\[D=D_n(k)=\Big\{ \sum_{S \in \binom{[n]}{k}} c_Se_S \mid 0 \leq c_S \leq 1\Big\}.\]
(For more on this polytope, see \cite{BhanuMurthySrinivasan}.)
Moreover, if we let $L \subset \ZZ^n$ be the lattice generated by the $e_S$ consisting of lattice points whose coordinates have sum divisible by $k$ (as long as $n>k$), then each $d(K)$ lies in $D \cap L$. Our main result will be to show that $D \cap L$ contains a point that is not the degree sequence of a $k$-uniform hypergraph when $k \geq 3$.

As a remark, this is closely related to the weaker question of whether every point of $L$ lying in the real cone generated by the $e_S$ lies in the semigroup generated by the $e_S$. This is well known to be the case and is equivalent to normality of the monomial algebra generated by the $\mathbf x^S=x_{i_1}x_{i_2}\cdots x_{i_k}$. (See, for instance, \cite{Sturmfels}.) It is also easy to derive the affirmative answer to this question for $\lambda$-balanced hypergraphs as defined in the next section. The essential difference with the present question is that here we are restricted to using each hyperedge at most once.

For a hypergraph $K$, we will define $D(K)$ to be the zonotope generated by the hyperedges in $K$, so
\[D(K) = \left\{\sum_{S \in K} c_Se_S \mid 0 \leq c_S \leq 1 \right\}.\]

\begin{lemma} \label{face}
Let $K$ be a $k$-uniform hypergraph on $n$ vertices. Then any nonempty face of $D(K)$ is a translate of $D(K^0)$ for some $K^0 \subset K$. Moreover, $D(K) \cap L$ contains a point that is not the degree sequence of a subhypergraph of $K$ if and only if $D(K^0) \cap L$ contains a point that is not the degree sequence of a subhypergraph of $K^0$.
\end{lemma}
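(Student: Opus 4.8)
\textit{Proof proposal.} The plan is to read off the faces of the zonotope $D(K)$ from linear functionals in the usual way, and then to track what happens to degree sequences and to the lattice $L$ under the resulting translation.

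For the first assertion I would fix a linear functional $\ell$ on $\RR^n$ and look at the face $F$ of $D(K)=\{\sum_{S\in K}c_Se_S : 0\le c_S\le 1\}$ on which $\ell$ is maximized. Maximizing $\ell(\sum c_Se_S)=\sum c_S\,\ell(e_S)$ subject to $0\le c_S\le 1$ forces $c_S=1$ whenever $\ell(e_S)>0$, forces $c_S=0$ whenever $\ell(e_S)<0$, and leaves $c_S$ free in $[0,1]$ whenever $\ell(e_S)=0$. Writing $K^+=\{S\in K:\ell(e_S)>0\}$ and $K^0=\{S\in K:\ell(e_S)=0\}$, this gives $F=v+D(K^0)$ with $v=\sum_{S\in K^+}e_S=d(K^+)$. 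Since every nonempty face of $D(K)$ arises this way (including $D(K)$ itself, for $\ell=0$), each is a translate of some $D(K^0)$ with $K^0\subseteq K$. I would also record at this point that $v$ is an integer combination of the $e_S$, hence $v\in L$.

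For the second assertion I would prove two bookkeeping facts about this $F$, $K^0$, $v$. First, since $v\in L$ and translation by a lattice vector permutes $L$, we get $F\cap L=v+(D(K^0)\cap L)$. Second, a subhypergraph $K'\subseteq K$ has $d(K')\in F$ exactly when $\ell(d(K'))$ is maximal over $D(K)$, which (with $K^-=\{S\in K:\ell(e_S)<0\}$) happens precisely when $K^+\subseteq K'$ and $K'\cap K^-=\emptyset$, i.e. precisely when $K'=K^+\sqcup K''$ for some $K''\subseteq K^0$, in which case $d(K')=v+d(K'')$. Combining the two, the map $p\mapsto v+p$ is a bijection from $D(K^0)\cap L$ onto $F\cap L$ carrying degree sequences of subhypergraphs of $K^0$ exactly onto degree sequences of subhypergraphs of $K$ that lie in $F$. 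Hence $D(K^0)\cap L$ contains a point that is not the degree sequence of a subhypergraph of $K^0$ if and only if $F\cap L\subseteq D(K)\cap L$ contains a point that is not the degree sequence of a subhypergraph of $K$; in particular a bad point for $K^0$ yields one for $K$, and a bad point for $K$ lying on the face $F$ yields one for $K^0$.

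The step I expect to be the main obstacle is the second bookkeeping fact: one must check that a subhypergraph of $K$ whose degree sequence lands in $F$ is \emph{forced} to contain all of $K^+$ and to avoid all of $K^-$, so that no extra degree sequences enter the face and none are lost. This is exactly where the separating-functional description of the face is used, and it is also the reason the translation vector in the first assertion is precisely $d(K^+)$ rather than some other lattice point.
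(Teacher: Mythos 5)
Your proposal is correct and follows essentially the same route as the paper: identify the face via the sign pattern of $\ell(e_S)$, obtain $F = v + D(K^0)$ with $v = \sum_{S\in K^+} e_S \in L$, and observe that the same argument with $c_S \in \{0,1\}$ identifies the subhypergraph degree sequences lying on $F$ as exactly $v + d(K'')$ for $K''\subseteq K^0$. Your extra bookkeeping (that $d(K')\in F$ forces $K^+\subseteq K'$ and $K'\cap K^-=\emptyset$, and that the honest biconditional concerns bad points \emph{on the face} $F$) is just a careful spelling-out of what the paper compresses into one sentence, and it matches the direction in which the lemma is actually applied.
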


\begin{proof}
Choose any weight vector $w \in (\RR^*)^n$. To maximize $w\left(\sum c_Se_S\right)=\sum (c_S \cdot w(e_S))$ for $0 \leq c_S \leq 1$, we must take $c_S = 1$ when $w(e_S)>0$ and $c_S=0$ when $w(e_S)<0$, while $c_S$ can be arbitrary if $w(e_S)=0$. Thus the face on which $w$ is maximized is a translate of $D(K^0)$ by $\sum_{S \in K^+} e_S \in L$, where $K^+$ is the set of hyperedges $S$ on which $w$ is positive. The same argument gives the result for degree sequences (simply restrict $c_S$ to be 0 or 1).
\end{proof}

Therefore it suffices to exhibit a weight vector $w$ to maximize and a point of $D(K^0) \cap L$ that is not the degree sequence of a subhypergraph of $K^0$.

\begin{prop} \label{example1}
Let $k=3$ and $n=16$. If
\[w=(8,6,6,4,1,1,0,0,0,0,-2,-2,-3,-3,-5,-12),\]
then
\[p=(2,1,1,2,1,1,1,1,1,1,1,1,2,2,2,1)\] lies in $D(K^0)\cap L$ but is not the degree sequence of a subhypergraph of $K^0$.
\end{prop}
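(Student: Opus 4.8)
The plan is to unpack the three things the proposition asserts and verify them in turn: determine the hypergraph $K^0$ on which the linear functional $w$ vanishes, check that $p$ lies in $D(K^0)\cap L$, and check that $p$ is not the degree sequence of any subhypergraph of $K^0$. By Lemma~\ref{face}, $K^0$ is exactly the set of triples $S=\{i,j,l\}\in\binom{[16]}{3}$ with $w_i+w_j+w_l=0$. Since no two of the nonzero weights sum to zero, a vanishing triple either lies entirely inside the zero-block $\{7,8,9,10\}$ --- giving the four triples $\{7,8,9\},\{7,8,10\},\{7,9,10\},\{8,9,10\}$ --- or has all three weights nonzero, and sorting the latter by sign pattern (two positive and one negative, or one positive and two negative) produces eleven more:
\[\{1,4,16\},\ \{2,3,16\},\ \{4,5,15\},\ \{4,6,15\},\ \{5,6,11\},\ \{5,6,12\},\]
\[\{4,11,12\},\ \{2,13,14\},\ \{3,13,14\},\ \{1,13,15\},\ \{1,14,15\}.\]
So $K^0$ has $15$ hyperedges.

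That $p\in L$ is immediate: $L$ consists of the integer points whose coordinate sum is divisible by $3$, and $\sum_i p_i=21$. For $p\in D(K^0)$ I would solve the system $\sum_{S\in K^0}c_S e_S=p$ with $0\le c_S\le1$; the degree equations at the twinned pairs $\{2,3\}$, $\{5,6\}$, $\{11,12\}$, $\{13,14\}$ and at the block $\{7,8,9,10\}$ force many of the $c_S$ to coincide, which collapses the system to a handful of unknowns and yields the feasible solution assigning $c_S=\tfrac13$ to the four triples inside $\{7,8,9,10\}$ together with $\{2,3,16\},\{4,5,15\},\{4,6,15\},\{5,6,11\},\{5,6,12\}$, and $c_S=\tfrac23$ to the remaining six triples. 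Confirming this is a mechanical check of the sixteen coordinate equations.

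For the non-realizability I would use a divisibility obstruction localized at $\{7,8,9,10\}$: among the fifteen hyperedges of $K^0$, the only ones meeting $\{7,8,9,10\}$ are the four contained in it, so for every subhypergraph $K'\subseteq K^0$,
\[d_7(K')+d_8(K')+d_9(K')+d_{10}(K')=3\cdot\#\{S\in K':S\subseteq\{7,8,9,10\}\}\equiv 0\pmod 3,\]
whereas $p_7+p_8+p_9+p_{10}=4$. This contradiction shows $p$ is not the degree sequence of any subhypergraph of $K^0$, completing the proof.

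I do not expect any genuine conceptual obstacle here once $w$ is given: steps one and three are short, and step two is bounded (if slightly tedious) bookkeeping, so the only real ``main obstacle'' is carefully verifying $p\in D(K^0)$ without an arithmetic slip. The actual content of the proposition lies upstream, in the choice of $w$: it is engineered precisely so that $K^0$ is rich enough to carry $p$ inside its zonotope, yet structured so that the $\{7,8,9,10\}$ block alone forces a contribution $4\equiv 1\pmod 3$ that obstructs integral realizability while still being consistent with the total coordinate sum lying in $3\ZZ$.
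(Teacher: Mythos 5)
Your proposal is correct and follows essentially the same route as the paper: you exhibit the identical convex combination (coefficients $\tfrac13$ and $\tfrac23$ on the same nine and six hyperedges) to show $p\in D(K^0)$, and you use the same mod-$3$ obstruction at the zero-weight block $\{7,8,9,10\}$ to rule out realizability. The only addition is your explicit enumeration of the fifteen hyperedges of $K^0$, which the paper leaves implicit; this checks out.
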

\begin{proof}
Since the sum of the entries of $p$ is $21 = 3\cdot  7$, $p$ lies in $L$. Also,
\begin{multline*}
p = \frac 13 (e_{2,3,16}+e_{4,5,15}+e_{4,6,15}+e_{5,6,11}+e_{5,6,12}+e_{7,8,9}+e_{7,8,10}+e_{7,9,10}+e_{8,9,10})\\
+\frac 23 (e_{1,4,16}+e_{1,13,15}+e_{1,14,15}+e_{2,13,14}+e_{3,13,14}+e_{4,11,12}).
\end{multline*}
Since $w$ vanishes on each $e_S$ on the right side, it follows that $p \in D(K^0)$. However, $p$ is not the degree sequence of a subhypergraph of $K^0$: since $w_7=w_8=w_9=w_{10}=0$ and otherwise $w_i\neq-w_j$, we have $(e_7+e_8+e_9+e_{10})\cdot e_S$ is 0 or 3 for any $S \in K^0$. But $(e_7+e_8+e_9+e_{10})\cdot p = 4$, which is not divisible by 3, so it cannot be the sum of some $e_S$ for $S \in K^0$.
\end{proof}

Using this, we can easily derive the following.

\begin{thm} \label{main}
For $k \geq 3$ and $n \geq k+13$, the set of degree sequences of $k$-uniform hypergraphs on $n$ vertices is not the intersection of a lattice and a convex polytope.
\end{thm}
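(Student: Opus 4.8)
The plan is to reduce the general statement to Proposition~\ref{example1} in three moves: upgrade that proposition via Lemma~\ref{face}, observe that being an intersection of a lattice and a polytope forces the degree sequences to constitute \emph{all} of $D_n(k)\cap L$, and finally lift the resulting single bad point from $(k,n)=(3,16)$ to all $k\ge 3$ and $n\ge k+13$. For the first move I would apply Lemma~\ref{face} with $K=\binom{[16]}{3}$ and the weight vector $w$ of Proposition~\ref{example1}; here $D(K)=D_{16}(3)$, the lattice is $L$, and the subhypergraphs of $K$ are exactly the $3$-uniform hypergraphs on $16$ vertices. The ``moreover'' clause of Lemma~\ref{face} together with Proposition~\ref{example1} then produces a point $q\in D_{16}(3)\cap L$ that is the degree sequence of \emph{no} $3$-uniform hypergraph on $16$ vertices. (Unwinding the proof one sees $q=p+\sum_{S\in K^+}e_S$, but all that will be used is that $q\in\ZZ^{16}$, $q\in D_{16}(3)$, and $3\mid\sum_i q_i$, which hold because $q\in D_{16}(3)\cap L$.)

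For the second move, let $\mathcal D$ denote the set of degree sequences of $k$-uniform hypergraphs on $[n]$. Its convex hull is $D_n(k)$, since every vertex of the zonotope is a $\{0,1\}$-combination $\sum_{S\in K}e_S$ and hence a degree sequence; and the lattice it generates is $L$, since $\mathcal D$ contains $0$ and every $e_S$ — which generate $L$ — and is contained in $L$. Consequently, if $\mathcal D=L'\cap P$ for some lattice $L'$ and polytope $P$, then $L'\supseteq L$ and $P\supseteq D_n(k)$, whence $\mathcal D\subseteq L\cap D_n(k)\subseteq L'\cap P=\mathcal D$; so $\mathcal D=L\cap D_n(k)$, and it suffices to exhibit a single point of $L\cap D_n(k)$ that is not a degree sequence.

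To build that point for $k\ge 3$ and $n\ge k+13$, split $[n]=[16]\sqcup U\sqcup I$ with $U=\{17,\dots,k+13\}$ (so $|U|=k-3$, empty when $k=3$) and $I=\{k+14,\dots,n\}$. Set $N_0=\frac13\sum_{i=1}^{16}q_i$, which lies in $\ZZ$ since $3\mid\sum_i q_i$, and let $\tilde q\in\RR^n$ agree with $q$ on $[16]$, equal $N_0$ on $U$, and equal $0$ on $I$. Then $\sum_i\tilde q_i=3N_0+(k-3)N_0=kN_0$, so $\tilde q\in L$; and writing $q=\sum_{T\in\binom{[16]}{3}}c_Te_T$ with $c_T\in[0,1]$ and setting $c'_{T\cup U}=c_T$ for $T\in\binom{[16]}{3}$ and $c'_S=0$ otherwise, one gets $\sum_S c'_Se_S=\tilde q$ (using $\sum_T c_T=N_0$), so $\tilde q\in D_n(k)$. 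Finally, suppose some $k$-uniform hypergraph $H$ on $[n]$ had $d(H)=\tilde q$. Then $|H|=\frac1k\sum_i\tilde q_i=N_0$, so each vertex $u\in U$, having degree $N_0=|H|$, lies in every hyperedge of $H$, while no hyperedge meets $I$; hence every hyperedge of $H$ has the form $T\cup U$ with $T\in\binom{[16]}{3}$, and $H$ corresponds to a $3$-uniform hypergraph on $[16]$ with degree sequence $\tilde q|_{[16]}=q$, contradicting the first move.

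The crux of the argument — the one place where a little ingenuity is required — is this lifting step: a priori $H$ could use $k$-edges of many shapes, and one must corner it into using only the ``universal'' edges $T\cup U$. The device is to give each of the $k-3$ vertices of $U$ degree exactly $N_0$, which the coordinate sum $kN_0$ forces to equal the total number of hyperedges; this pins $U$ inside every hyperedge and collapses the problem back onto the $3$-uniform obstruction of Proposition~\ref{example1}. Everything else is routine bookkeeping, and in particular the case $k=3$ of the construction (where $U=\emptyset$) handles all $n\ge 16$ directly.
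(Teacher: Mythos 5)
Your proof is correct and is essentially the paper's argument: the same reduction via Lemma~\ref{face} and Proposition~\ref{example1}, the same minimality observation about $D$ and $L$, and the same lifting device (vertices of degree $|H|$ forced into every hyperedge, isolated vertices of degree $0$), except that you unroll the paper's induction on $k$ into a single explicit construction of $\tilde q$ and verify the details the paper leaves implicit.
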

\begin{proof}
It suffices to show that there is a point in $D \cap L$ that is not a degree sequence (since $D$ and $L$ are the smallest convex polytope and lattice containing all degree sequences). Combining Lemma~\ref{face} and Proposition~\ref{example1} gives the result for $k=3$ and $n=16$. Since $D_n(k)$ is the face of $D_{n+1}(k)$ with last coordinate 0, Lemma~\ref{face} also gives the result for $k=3$ and $n\geq 16$.

Consider the map $f\colon (d_1, d_2,\dots, d_n) \mapsto (d_1,d_2,\dots,d_n,\frac{1}{k}(d_1+\dots+d_n))$. Then $d$ is a $k$-uniform hypergraph degree sequence on $n$ vertices if and only if $f(d)$ is a $(k+1)$-uniform hypergraph degree sequence on $n+1$ vertices (simply add vertex $n+1$ to all hyperedges). Since $f$ is linear, it also sends $D_n(k)$ into $D_{n+1}(k+1)$, so any counterexample for $(n,k)$ yields a counterexample for $(n+1,k+1)$. An easy induction completes the proof.
\end{proof}

It is possible that with additional work or computation the constant 13 may be improved.

In the next section, we will prove an analogous result for $k$-partite $k$-uniform hypergraphs as well as the more general $\lambda$-balanced hypergraphs. (Our construction below can also be used to prove Theorem~\ref{main} but with a constant of 14 instead of 13.)

\section{$\lambda$-balanced hypergraphs}

Let $\lambda=(\lambda_1, \lambda_2, \dots, \lambda_p)$ be a partition of $k$. We say a $k$-uniform hypergraph is \emph{$\lambda$-balanced} if its vertex set can be partitioned into $p$ sets $V_1, \dots, V_p$ such that each hyperedge contains $\lambda_i$ vertices from $V_i$. (We will also call a hyperedge $\lambda$-balanced if it satisfies this property.) A $(1,1,\dots, 1)$-balanced partition is called \emph{$k$-partite}. Note that every $k$-uniform hypergraph is $(k)$-balanced.

Let $n_i=|V_i|$, and label the vertices in $V_i$ by $v^i_1, v^i_2, \dots, v^i_{n_i}$. We then associate to a $\lambda$-balanced hypergraph $K$ a degree sequence \[d=(d^1_1, d^1_2, d^1_3, \dots;\quad d^2_1, d^2_2, \dots;\quad \dots;\quad d^p_1, d^p_2, \dots),\]
where $d^i_j$ gives the number of hyperedges in $K$ containing vertex $v^i_j$. As before, this degree sequence is $\sum_{S \in K} e_S$, where $e_S$ is the sum of the standard basis vectors in $\RR^{n_1}\times \RR^{n_2} \times \dots \times \RR^{n_p}$ corresponding to vertices in the hyperedge $S$. When $n_i>\lambda_i$ for all $i$, the lattice $L$ generated by all possible $e_S$ consists of all sequences $d$ for which there exists $q\in \ZZ$ such that $\sum_{j=1}^{n_i} d^i_j = \lambda_iq$ for all $i$. (In other words, the sum of the degrees of the vertices in $V_i$ must be the same integer multiple of $\lambda_i$.)

As before, we let $D$ be the zonotope generated by all $e_S$ for $\lambda$-balanced hyperedges $S$ and ask whether all points in $D \cap L$ are degree sequences for $\lambda$-balanced hypergraphs. We will again find that this is not the case for any $\lambda$ when $k\geq 3$ and the $n_i$ are sufficiently large. We first consider a special case.

\begin{prop} \label{example2}
Let $\lambda=(1,1,1)$ and $(n_1, n_2, n_3)=(5,6,6)$. Also let
\[w = (-7,-7,-7,-7,-7;\quad 1, 1, 2, 2, 3, 3;\quad 6, 6,5,5,4,4)\]
and define $K_0$ as in Lemma~\ref{face}. Then 
\[p=(11,9,6,3,1;\quad 2,4,6,8,3,7;\quad 2,4,6,8,3,7)\]
lies in $D(K_0) \cap L$ but is not the degree sequence of a subhypergraph of $K^0$.
\end{prop}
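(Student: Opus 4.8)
The plan is to mimic exactly the structure of the proof of Proposition~\ref{example1}, with the three coordinate blocks now playing the role that the single block played before. First I would verify the easy membership facts: that $p \in L$, by checking that the three block-sums $\sum_j d^1_j$, $\sum_j d^2_j$, $\sum_j d^3_j$ are all equal to the same multiple of $\lambda_i = 1$ (here all three sums should be $30$, so $q=30$ works and $p \in L$); and that $p \in D(K^0)$, by exhibiting $p$ explicitly as a nonnegative rational combination, with coefficients in $[0,1]$, of vectors $e_S$ for $\lambda$-balanced triples $S$ on which $w$ vanishes. The condition $w(e_S)=0$ for a $(1,1,1)$-balanced edge $S = \{v^1_a, v^2_b, v^3_c\}$ reads $-7 + w^2_b + w^3_c = 0$, i.e. $w^2_b + w^3_c = 7$; from the given weight vector the pairs $(b,c)$ with $w^2_b + w^3_c = 7$ are exactly those pairing the $j$th smallest weight in $V_2$ with the $j$th largest in $V_3$ (weights $1{+}6$, $2{+}5$, $3{+}4$), so the relevant $e_S$ are of the form $e_{v^1_a v^2_b v^3_c}$ with $w^2_b = 8 - w^3_c$. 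I would then write down an explicit such convex/conical decomposition of $p$ — this is the one genuinely computational step, finding nonnegative $c_S$ summing appropriately in each coordinate — and once it is displayed, membership in $D(K^0)$ follows because $w$ annihilates every $e_S$ used.

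Next, for the harder half — that $p$ is \emph{not} the degree sequence of a subhypergraph of $K^0$ — I would find a linear functional that is constant (mod some integer) on all edges of $K^0$ but takes a forbidden value on $p$, just as $e_7+e_8+e_9+e_{10}$ was used before. The natural choice is to pick an index $j$ in $V_2$ and the index $j'$ in $V_3$ that is ``$w$-paired'' with it, and consider $\ell = e_{v^2_j} + e_{v^3_{j'}}$: for any $\lambda$-balanced edge $S$ in $K^0$ (i.e. any edge with $w(e_S)=0$), the value $\ell(e_S)$ is $0$, $1$, or $2$, but the geometry of $K^0$ should force it to be $0$ or $2$ only — because an edge using $v^2_j$ but not $v^3_{j'}$ (or vice versa) would have to pair $v^2_j$ with some other $v^3_c$, and I would need to check that for every such $c$ the resulting weight sum is nonzero, so that edge is not in $K^0$. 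Then $\ell(p)$ should come out odd (the two paired coordinates of $p$ should sum to an odd number — e.g. a block-$2$ coordinate and its partner block-$3$ coordinate, which by the symmetry of $p$ between blocks $2$ and $3$ are equal, giving an even sum, so I would instead pair coordinates whose $p$-values are $2$ and $3$, or more carefully identify which paired coordinates give an odd total), contradicting divisibility by $2$. Actually the cleanest version: find a set $T$ of coordinates, one "column" of $w$-zeros, such that $e_T \cdot e_S$ is always a fixed multiple of some $m > 1$ for $S \in K^0$, while $e_T \cdot p$ is not a multiple of $m$.

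The main obstacle I anticipate is precisely pinning down the combinatorial structure of $K^0$ tightly enough to make the parity/divisibility argument go through: I must be sure that \emph{every} $\lambda$-balanced edge $S$ with $w(e_S) = 0$ has the claimed intersection pattern with the chosen functional, which amounts to a finite but slightly delicate case check over which pairs $(b,c)$ (and possibly triples, tracking the $V_1$ coordinate too, though for $\lambda_1=1$ the $V_1$-vertex is unconstrained by $w$ since all its weights are $-7$) can co-occur in a $w$-annihilated edge. The asymmetry in the $w$-values within $V_2$ and $V_3$ (the repeated entries $1,1,2,2,3,3$ and $6,6,5,5,4,4$) is evidently engineered so that the zero-weight edges form a restricted family, and verifying that no ``diagonal-breaking'' edge sneaks in is where care is needed. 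Once that structure is nailed down, the rest — exhibiting the explicit decomposition and the final arithmetic contradiction — is routine bookkeeping, entirely parallel to Proposition~\ref{example1}.
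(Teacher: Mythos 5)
Your first half is fine in outline: you correctly identify $K^0$ as the set of triples $\{v^1_q,v^2_r,v^3_s\}$ with $w^2_r+w^3_s=7$, and exhibiting any explicit $[0,1]$-combination of those $e_S$ summing to $p$ would finish it (the paper does this slickly by writing $p=\tfrac12(p^-+p^+)$ for two honest degree sequences $p^\pm$ of subhypergraphs of $K^0$). But note one point you partly gloss over: because the weights in $V_2$ and $V_3$ are doubled up, each value $w^2_r$ is matched by \emph{two} vertices of $V_3$, so $K^0$ is not a ``diagonal'' family but consists of all $(r,s)$ in the three $2\times 2$ blocks $\{1,2\}\times\{1,2\}$, $\{3,4\}\times\{3,4\}$, $\{5,6\}\times\{5,6\}$ (together with an arbitrary $V_1$-vertex). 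This is not an inconvenience to be checked away; it is the point of the construction, and it kills your plan for the hard half.

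The genuine gap is in the non-realizability argument. A functional $\ell=e_{v^2_j}+e_{v^3_{j'}}$ takes all three values $0,1,2$ on $K^0$ (an edge may use $v^2_1$ with $v^3_2$), and the block functional $e_T$ with $T$ a whole $2\times2$ block takes values in $\{0,2\}$ but $e_T\cdot p$ equals $12$, $28$, $20$ on the three blocks, all even. In fact \emph{no} congruence obstruction of the kind used in Proposition~\ref{example1} can exist here: one checks that $p$ lies in the lattice generated by $\{e_S : S\in K^0\}$ (the $V_2$-sum and $V_3$-sum of $p$ over each block agree, namely $6$, $14$, $10$, and the $V_1$-sum is $30=6+14+10$), so any linear functional whose values on the generators all lie in one residue class mod $m$ assigns $p$ the forced residue as well. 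The actual obstruction is of Gale--Ryser type: letting $b_{rs}$ count the $q$ with $\{v^1_q,v^2_r,v^3_s\}$ in the putative subhypergraph, the matrix $B$ is supported on the three blocks, has row and column sums $(2,4,6,8,3,7)$, and entries in $[0,5]$, which leaves only a handful of possibilities for each block; one then needs the dominance condition $\mu_1+\dots+\mu_q\le\nu_1+\dots+\nu_q$, where $\mu=(11,9,6,3,1)$ is the $V_1$-degree sequence and $\nu_q$ counts the entries $b_{rs}\ge q$, and a finite check shows no choice of blocks satisfies it. Without replacing your divisibility idea by some such majorization argument, the proof does not go through.
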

\begin{proof}
Define points
\begin{alignat*}{2}
p^-&=(10,8,4,2,0;\quad&1,3,5,7,2,6;\quad&1,3,5,7,2,6),\\
p^+&=(12,10,8,4,2;\quad&3,5,7,9,4,8;\quad&3,5,7,9,4,8),
\end{alignat*}
so $p=\frac12(p^-+p^+)$.
Note that the sum of the coordinates of the three parts of $p^-$ are all 24, so $p^- \in L$. Likewise, $p^+$ and $p$ also lie in $L$.

Let
\[A=(a_{rs})=
\begin{pmatrix}
1&2&0&0&0&0\\
2&3&0&0&0&0\\
0&0&3&4&0&0\\
0&0&4&5&0&0\\
0&0&0&0&1&3\\
0&0&0&0&3&5\end{pmatrix}.\]
Note that $K^0 = \{\{v^1_q,v^2_r,v^3_s\} \mid 1 \leq q \leq 5, 1\leq r,s \leq 6,  a_{rs} \neq 0\}$.

Then $p^- = \sum e_S$, where the sum ranges over all $S=\{v^1_q, v^2_r, v^3_s\}$ such that $q < a_{rs}$. Likewise $p^+ = \sum e_S$, where instead $q \leq a_{rs}$. Therefore $p^-$, $p^+$, and their midpoint $p$ lie in $D(K_0)$.

We will now show that $p$ is not the degree sequence of a hypergraph that uses only hyperedges in $K^0$. Suppose it were, so that we could write $p=\sum_{S \in K} e_S$ for some $K \subset K_0$. Let $B=(b_{rs})$ be the $6 \times 6$ matrix such that $b_{rs}$ counts the number of $q$ for which $\{v^1_q,v^2_r,v^3_s\} \in K$. Then the sequence of row and column sums of $B$ must both be $(2,4,6,8,3,7)$. Since we also know that $0 \leq b_{rs}\leq 5$, this means that:
\begin{align*}
B_1=\begin{pmatrix}
b_{11}&b_{12}\\b_{21}&b_{22}
\end{pmatrix}
&\in
\left\{
\begin{pmatrix}0&2\\2&2\end{pmatrix},
\begin{pmatrix}1&1\\1&3\end{pmatrix},
\begin{pmatrix}2&0\\0&4\end{pmatrix}
\right\}\\
B_2=\begin{pmatrix}
b_{33}&b_{34}\\b_{43}&b_{44}
\end{pmatrix}
&\in
\left\{
\begin{pmatrix}1&5\\5&3\end{pmatrix},
\begin{pmatrix}2&4\\4&4\end{pmatrix},
\begin{pmatrix}3&3\\3&5\end{pmatrix}
\right\}\\
B_3=\begin{pmatrix}
b_{55}&b_{56}\\b_{65}&b_{66}
\end{pmatrix}
&\in
\left\{
\begin{pmatrix}0&3\\3&4\end{pmatrix},
\begin{pmatrix}1&2\\2&5\end{pmatrix}
\right\}.
\end{align*}

Moreover, for $1 \leq r,s \leq 6$, the pair $\{v^2_r,v^3_s\}$ can appear in at most $\min \{q, b_{rs}\}$ hyperedges with one of the vertices in $\{v^1_1, \dots, v^1_q\}$. Therefore, if we let $\mu=(11,9,6,3,1)$, then $\mu_1+\dots+\mu_q \leq \sum_{r,s} \min \{q, b_{rs}\}$. In other words, if $\nu = (\nu_1, \dots, \nu_5)$ is the partition such that $\nu_q$ counts the number of $b_{rs}$ that are at least $q$, then $\mu_1 + \dots + \mu_q \leq \nu_1 + \dots + \nu_q$.

It is now straightforward to show that there are no possible choices of $B_1$, $B_2$, and $B_3$ satisfying these conditions: if
$B_3 = (\begin{smallmatrix}0&3\\3&4\end{smallmatrix})$, we cannot choose $B_1$ such that both $\mu_1 \leq \nu_1$ and $\mu_1+\mu_2 \leq \nu_1+\nu_2$. Similarly if $B_3=(\begin{smallmatrix}1&2\\2&5\end{smallmatrix})$, we cannot choose $B_2$ such that both $\mu_1+\mu_2+\mu_3 \leq \nu_1+\nu_2+\nu_3$ and $\mu_1+\mu_2+\mu_3+\mu_4 \leq \nu_1+\nu_2+\nu_3+\nu_4$. Thus $p \in D(K_0) \cap L$ is not the degree sequence of a hypergraph using only hyperedges in $K_0$.
\end{proof}

Combining Lemma~\ref{face} and Proposition~\ref{example2} gives our desired result for $3$-partite $3$-uniform hypergraphs, and we can easily extend this result to $k$-partite $k$-uniform hypergraphs.

\begin{thm} \label{partite}
For $k \geq 3$, consider $k$-partite $k$-uniform hypergraphs with parts of sizes $n_1, n_2, \dots, n_k$ for which $n_1 \geq 5$, $n_2 \geq 6$, $n_3 \geq 6$, and $n_i \geq 1$ otherwise. The corresponding set of degree sequences is not the intersection of a lattice and a convex polytope.
\end{thm}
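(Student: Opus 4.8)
The plan is to prove Theorem~\ref{partite} by reducing it to Proposition~\ref{example2}, using Lemma~\ref{face} together with an explicit linear embedding, in the spirit of the reductions in the proof of Theorem~\ref{main}. Fix parts of sizes $n_1 \geq 5$, $n_2 \geq 6$, $n_3 \geq 6$, and $n_i \geq 1$ otherwise; write $K$ for the complete $k$-partite $k$-uniform hypergraph on these parts, and let $D = D(K)$ and $L$ be the associated zonotope and lattice. Since $D$ is the smallest convex polytope and $L$ the smallest lattice containing all degree sequences, it suffices to produce a point of $D \cap L$ that is not a degree sequence.

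First I would isolate the relevant face. Let $w$ be the weight vector equal to $0$ on the first $5$ vertices of $V_1$, the first $6$ vertices of $V_2$, the first $6$ vertices of $V_3$, and on $v^i_1$ for each $i \geq 4$, and equal to $-1$ on every remaining vertex. Then $w(e_S) = 0$ precisely when $S$ uses only these distinguished vertices, and $w(e_S) < 0$ otherwise, so by Lemma~\ref{face} the face of $D$ maximizing $w$ is $D(K^0)$ with
\[K^0 = \bigl\{\, S \cup \{v^4_1, \dots, v^k_1\} \;:\; S \text{ a $3$-partite hyperedge on the first $5,6,6$ vertices of } V_1, V_2, V_3 \,\bigr\}.\]
By the second assertion of Lemma~\ref{face}, it then suffices to exhibit a point of $D(K^0) \cap L$ that is not the degree sequence of a subhypergraph of $K^0$.

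The main device is the linear map $g \colon \RR^5 \times \RR^6 \times \RR^6 \to \RR^{n_1} \times \dots \times \RR^{n_k}$ that appends zeros to extend the three coordinate blocks to lengths $n_1, n_2, n_3$ and fills each of the remaining $k-3$ blocks with the vector $(\sigma, 0, \dots, 0)$, where $\sigma$ is the sum of the coordinates in the first block. This $g$ is injective and satisfies $g(e_S) = e_{g(S)}$, where $g(S) := S \cup \{v^4_1, \dots, v^k_1\}$; consequently $g$ carries the degree-sequence zonotope $D_0$ of the complete $3$-partite $3$-uniform hypergraph on parts of sizes $(5,6,6)$ bijectively onto $D(K^0)$, sends the associated lattice $L_0$ into $L$, and sends degree sequences of subhypergraphs to degree sequences of subhypergraphs of $K^0$. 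I expect the one genuinely delicate point to be this lattice compatibility: it is essential that $g$ places the value $\sigma$ rather than $0$ at the new vertices $v^i_1$, since when some $n_i$ equals $1$ the coordinate at $v^i_1$ is pinned by the lattice condition and a zero there would fall outside $L$.

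Finally I would combine the pieces. By Proposition~\ref{example2} together with Lemma~\ref{face}, $D_0 \cap L_0$ contains a point $p$ that is not the degree sequence of any $3$-partite $3$-uniform hypergraph on parts of sizes $(5,6,6)$. Then $g(p) \in D(K^0) \cap L$, and if $g(p)$ were the degree sequence $\sum_{S \in H} e_S$ of some $H \subseteq K^0$, then writing $H = \{g(S) : S \in H_0\}$ and using the injectivity of $g$ would give $p = \sum_{S \in H_0} e_S$, contradicting the choice of $p$. Hence $g(p)$ is the required point of $D(K^0) \cap L$, and Theorem~\ref{partite} follows. Apart from the lattice bookkeeping just flagged, the remaining verifications are routine applications of Lemma~\ref{face}.
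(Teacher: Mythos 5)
Your proof is correct and follows essentially the same route as the paper: reduce to Proposition~\ref{example2} via Lemma~\ref{face}, handling the singleton parts by a linear embedding that records the forced degree $\sigma$ at each $v^i_1$ (the paper phrases this as the polytope and lattice for sizes $(5,6,6,1,\dots,1)$ being identical to the $(5,6,6)$ case under projection, then restricts to a face for larger $n_i$). Your explicit map $g$ and the lattice check at the new vertices simply make precise the step the paper treats as immediate.
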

\begin{proof}
As in Theorem~\ref{main}, combining Lemma~\ref{face} and Proposition~\ref{example2} gives the result for $k=3$ and $(n_1, n_2, n_3)=(5,6,6)$. Also note that the polytopes and lattices for $k\geq 3$ with $(n_1, n_2, n_3, n_4, \dots, n_k) = (5,6,6,1,\dots, 1)$ are all identical to the $k=3$ case (by projecting away the last $k-3$ coordinates) so this also proves those cases. Finally, increasing any $n_i$ but restricting to the face of the zonotope where the new vertices have degree 0 again reduces to the same case by Lemma~\ref{face}, completing the proof.
\end{proof}

Theorem~\ref{partite} is also easy to extend to $\lambda$-balanced hypergraphs for all $\lambda$ when $k \geq 3$. Consider a $\lambda$-balanced hypergraph on vertex sets $V_1, \dots, V_p$ of sizes $n_1, n_2, \dots, n_p$. We will say that $(n_1, \dots, n_p)$ is a \emph{$\lambda$-coarsening} of $(m_1, \dots, m_k)$ if each $V_i$ can be partitioned into $\lambda_i$ sets such that the sizes of all the resulting sets are $m_1, \dots, m_k$.

\begin{thm}\label{balanced}
Consider $\lambda$-balanced hypergraphs with parts of sizes $n_1, \dots, n_p$, where $(n_1, \dots, n_p)$ is a $\lambda$-coarsening of $(m_1, m_2, \dots, m_k)$ such that Theorem~\ref{partite} holds for parts of sizes $m_1, \dots, m_k$. (In particular, this will hold whenever the $n_i$ are sufficiently large.) Then the corresponding set of degree sequences is not the intersection of a lattice and a convex polytope.
\end{thm}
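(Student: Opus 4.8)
The plan is to imitate the way Proposition~\ref{example2} yields Theorem~\ref{partite}: produce a weight vector whose maximal face of the $\lambda$-balanced zonotope $D$ is, up to translation, $D(K_0)$ for a suitable $k$-partite hypergraph $K_0$ obtained from Proposition~\ref{example2}, and then invoke Lemma~\ref{face} together with Proposition~\ref{example2}.

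First I would reduce to the minimal situation. Reorder the $m_j$ so that $m_1\ge 5$, $m_2\ge 6$, $m_3\ge 6$ and $m_j\ge 1$ otherwise. Shrinking each piece of the coarsening down to sizes $5,6,6,1,\dots,1$ only deletes vertices, which by Lemma~\ref{face} amounts to passing to the face of $D$ (and the corresponding sublattice of $L$) on which those vertices have degree $0$; so, exactly as in the proof of Theorem~\ref{partite}, it suffices to treat the case in which each $V_i$ is partitioned into $\lambda_i$ pieces $P_j$ of sizes $5,6,6,1,\dots,1$. Fix one such partition. Let $K_0$ be the $k$-partite hypergraph on $P_1,\dots,P_k$ obtained from the hypergraph of Proposition~\ref{example2} on the pieces of sizes $5,6,6$ by adjoining to every edge the unique vertex of each size-$1$ piece, and let $p$ be the point of Proposition~\ref{example2} extended by giving each size-$1$-piece vertex the degree $30$ (the common part-sum there). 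Every piece has part-sum $30$, so $\sum_{v\in V_i}p_v=30\,\lambda_i$ and hence $p\in L$; extending each edge in the convex combination of Proposition~\ref{example2} to contain the size-$1$-piece vertices shows $p\in D(K_0)$.

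Next I would build the weight vector. Let $w^0$ agree with the weight vector of Proposition~\ref{example2} on the pieces of sizes $5,6,6$ (constant $-7$ on $P_1$, $(1,1,2,2,3,3)$ on $P_2$, $(6,6,5,5,4,4)$ on $P_3$) and be $0$ on every size-$1$ piece. Among the \emph{transversal} $\lambda$-balanced edges --- those meeting each piece $P_j$ in exactly one vertex --- a direct check (as in Proposition~\ref{example2}, where the blocks of $P_2$ and $P_3$ are matched so that matched-block weights sum to $7$) shows that those on which $w^0$ vanishes are precisely the edges of $K_0$. The main obstacle is the \emph{non-transversal} $\lambda$-balanced edges, namely those meeting one piece of some $V_i$ in two vertices and another piece of that $V_i$ in none: some of these may also satisfy $w^0(e_S)=0$, which would enlarge the maximal face beyond $D(K_0)$. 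I would remove this danger by perturbing: set $w=w^0+\varepsilon u$ with $\varepsilon>0$ small and $u$ a generic vector in the orthogonal complement of $\operatorname{span}\{e_T:T\in K_0\}$, so that $w$ still vanishes on $K_0$. The point is that $\operatorname{span}\{e_T:T\in K_0\}$ lies inside the subspace $W$ cut out by requiring the coordinate sums over the $k$ pieces $P_1,\dots,P_k$ to all be equal (every edge of $K_0$ is transversal), whereas a non-transversal $S$ has unequal piece-sums and hence $e_S\notin W$; thus each condition ``$u(e_S)=0$'' over a non-transversal $\lambda$-balanced $S$ cuts out a proper subspace of that orthogonal complement, and a generic $u$ avoids these finitely many conditions. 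Then $w(e_S)=0$ if and only if $S\in K_0$, so by Lemma~\ref{face} the maximal face of $D$ is a translate of $D(K_0)$.

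Finally, Lemma~\ref{face} reduces the theorem to producing a point of $D(K_0)\cap L$ that is not the degree sequence of a subhypergraph of $K_0$, and the extended $p$ is such a point: any subhypergraph of $K_0$ uses only transversal edges, so realizing $p$ would realize the original point of Proposition~\ref{example2} using only edges of the original hypergraph there --- which that proposition forbids. (The parenthetical ``sufficiently large'' claim holds because, for $k\ge 3$, any $(n_1,\dots,n_p)$ with all $n_i$ large enough is a $\lambda$-coarsening of some $(m_1,\dots,m_k)$ having three parts of sizes at least $5,6,6$.) I expect the only real content to be the perturbation step --- ruling out that a non-transversal edge slips onto the maximal face --- with everything else being bookkeeping with Lemma~\ref{face}.
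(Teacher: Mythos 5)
Your proof is correct, and its skeleton is the same as the paper's: exhibit a weight vector whose zero face of the $\lambda$-balanced zonotope contains only transversal hyperedges (one vertex per piece of the coarsening), then push the $k$-partite counterexample through Lemma~\ref{face}. Where you differ is in how the non-transversal $\lambda$-balanced edges are kept off that face. You first shrink the pieces to sizes $5,6,6,1,\dots,1$, reuse the weight vector of Proposition~\ref{example2} extended by zeros, and then add a small generic perturbation $u$ orthogonal to $\operatorname{span}\{e_T : T \in K_0\}$; your justification that each condition $u(e_S)=0$ is a proper linear condition on $u$ --- because $\operatorname{span}\{e_T\}$ lies in the equal-piece-sum subspace while $e_S$ for non-transversal $S$ does not --- is sound, and the rest is the bookkeeping you describe. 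The paper instead handles everything in one stroke with a single explicit weight vector: weight $-(1+N+\dots+N^{k-2})$ on every vertex of the first coarsening piece $W_1$ and $N^{i-2}$ on every vertex of $W_i$ for $i\ge 2$, where $N$ exceeds all $m_i$; uniqueness of base-$N$ representations with digits less than $N$ forces any $k$-set of total weight zero to meet each $W_i$ exactly once. The zero face is then the \emph{complete} $k$-partite hypergraph on $W_1,\dots,W_k$, so Theorem~\ref{partite} can be cited wholesale, with no preliminary shrinking and no genericity argument. Your route is longer but makes explicit the one genuine danger (a non-transversal edge slipping onto the maximal face); the paper's arithmetic choice of weights disposes of that danger constructively and shows the perturbation is not needed.
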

\begin{proof}
Let the vertex sets $V_1, \dots, V_p$ have corresponding coarsening $W_1, \dots, W_k$. It suffices to exhibit a weight vector $w$ such that the corresponding $K_0$ as in Lemma~\ref{face} is the complete $k$-partite $k$-uniform hypergraph on $W_1, \dots, W_k$. Indeed, any hyperedge in $K_0$ will be $\lambda$-balanced by the definition of $\lambda$-coarsening, and the lattice generated by hyperedges in $K_0$ is a sublattice of the lattice generated by all $\lambda$-balanced hyperedges. Therefore any counterexample for $K_0$ will yield a counterexample for $\lambda$-balanced hypergraphs as in Lemma~\ref{face}.

To exhibit such a weight vector, let $N$ be an integer larger than any $m_i$. Then let the weight of vertices in $W_1$ be $-(1+N+N^2+\dots+N^{k-2})$ and in $W_i$ be $N^{i-2}$ for $2 \leq i \leq k$. Then the only way to pick $k$ vertices the sum of whose weights is 0 is to take one from each $W_i$. In other words, the only hyperedges in $K_0$ are those that have one vertex from each $W_i$, as desired.
\end{proof}

\section{Acknowledgments}

The author would like to thank Victor Reiner for suggesting this direction of study, as well as for useful discussions and overall encouragement. This work was supported by a National Science Foundation Mathematical Sciences Postdoctoral Research Fellowship.

\bibliography{zonotope}
\bibliographystyle{plain}

\end{document}